\pdfoutput=1
\documentclass[leqno]{article}
\usepackage{calc,microtype}
\usepackage[a4paper, margin = 25mm]{geometry}
\RequirePackage[raiselinks=false,colorlinks=true,citecolor=blue,urlcolor=blue,linkcolor=blue,bookmarksopen=true]{hyperref}
\newcommand{\arxiv}[1]{\href{http://arxiv.org/pdf/#1}{arXiv:#1}}

\makeatletter
\def\@cite#1#2{[{#1\if@tempswa ,~#2\fi}]}
\makeatother

\usepackage[arrow, matrix, tips, curve, graph, rotate]{xy}
\SelectTips{cm}{10}
\newcommand{\pullbackcorner}[1][dr]{\save*!/#1-1.5pc/#1:(-1,1)@^{|-}\restore}
\newcommand{\cd}[2][]{\vcenter{\hbox{\xymatrix#1{#2}}}}
\newdir{ >}{{}*!/-5pt/\dir{>}}

\DeclareMathAlphabet{\mathbf}{OT1}{cmr}{b}{n}

\usepackage{amsmath,amsthm,amssymb}

\usepackage{eucal}

\DeclareMathAlphabet{\mathbbe}{U}{bbold}{m}{n}

\newcommand{\C}{\mathcal{C}}

\newcommand{\simplexcategory}{\mathbbe{\Delta}}
\newcommand{\Decbot}[1]{\operatorname{Dec}_\bot{}\kern-2pt{#1}}
\newcommand{\Dectop}[1]{\operatorname{Dec}_\top{}\kern-2pt{#1}}

\newcommand{\III}{\Lambda}

\newcommand{\op}{^{\text{{\rm{op}}}}}

\newtheorem{lem}[subsection]{Lemma}
\newtheorem{propo}[subsection]{Proposition}
\newtheorem*{thm}{Theorem}
\theoremstyle{definition}
\newtheorem*{defi}{Definition}
\theoremstyle{remark}

\makeatletter
\providecommand*\xRightarrow[2][]{\ext@arrow 04{22}0{\Rightarrowfill@}{#1}{#2}}
\makeatletter

\newcommand{\authorwithaffiliation}[3]{\parbox[t]{#3}{\begin{center}{\sc #1}\\[2pt] \footnotesize #2\end{center}}}

\begin{document}

\title{EVERY $2$-SEGAL SPACE IS UNITAL}
\author{\authorwithaffiliation{Matthew Feller}{University of Virginia}{9em}
\qquad  
\authorwithaffiliation{Richard Garner}{Macquarie University}{9em}
\qquad 
\authorwithaffiliation{Joachim Kock}{Univ. Aut\`onoma de 
Barcelona}{10em}
\\[-2ex]
\authorwithaffiliation{May U. Proulx}{University of Leicester}{8em}
\qquad
\authorwithaffiliation{Mark Weber}{Macquarie University}{8em}
}

\date{}

\maketitle

\section*{Introduction}

$2$-Segal spaces were introduced by Dyckerhoff and
Kapranov~\cite{Dyckerhoff-Kapranov:1212.3563} for applications in
representation theory, homological algebra, and geometry, motivated in
particular by Waldhausen's $S$-construction and Hall algebras. A $2$-Segal
space is a simplicial space $X$ such that for every triangulation $T$ of
every convex plane $n$-gon (for $n \geqslant 2$), we have $X_n \simeq
\lim_{t\in T} X(t)$. Independently, a little later, G\'alvez-Carrillo,
Kock, and Tonks~\cite{Galvez-Kock-Tonks:1512.07573} introduced the notion
of \emph{decomposition space} for applications in combinatorics, in
connection with M\"obius inversion. A decomposition space is a simplicial
space $X \colon \simplexcategory\op \rightarrow \mathcal{S}$ for
which all pushouts of active maps along inert maps in $\simplexcategory$
are sent to pullbacks in $\mathcal{S}$. Here, the \emph{inert} maps in
$\simplexcategory$ are generated by the outer coface maps, while the
\emph{active} maps are generated by the codegeneracy and inner coface maps.
The condition satisfied by $X$ with respect to pushouts of outer coface
maps against inner ones is precisely equivalent to the $2$-Segal condition.
For Dyckerhoff and Kapranov, the condition for pushouts of outer cofaces
against codegeneracies is a further axiom which they call {\em unitality}
\cite[Definition 2.5.2]{Dyckerhoff-Kapranov:1212.3563}. Thus, decomposition
spaces are the same thing as unital $2$-Segal spaces. While the $2$-Segal
axiom is expressly the condition required in order to induce a
(co)associative (co)multiplication on the linear span of $X_1$, the
unitality condition ensures that this (co)multiplication is (co)unital,
which is an important property in many applications.

The present note shows that the unitality condition is actually 
automatic, by proving:

\medskip

\noindent \textbf{Theorem.} \emph{Every $2$-Segal space is unital.}

\medskip

This result is unexpected, as it is
not so common in mathematics for (co)associativity to imply (co)unitality.

\section{Definitions and theorem}

In order to cover all flavours of $2$-Segal space that appear in the
literature, we give a proof which applies both to $2$-Segal objects in an
$\infty$-category with finite limits and to $2$-Segal objects in a Quillen
model category.
From now on, $\C$ will denote either an $\infty$-category with finite
limits or a Quillen model category. In the latter case, ``pullback''
will mean a (strictly commuting) homotopy pullback.

\begin{defi}
  (cf.~\cite{Dyckerhoff-Kapranov:1212.3563},~\cite{Galvez-Kock-Tonks:1512.07573})
  A simplicial object $X\colon \simplexcategory\op\to\C$ is called
  \emph{$2$-Segal} when the commuting squares that express the simplicial
  identities between inner and outer face maps of $X$ are
  pullback squares.
    More precisely, for all  $0<i<n$ we have
  pullbacks
  \begin{equation*}
    \cd[@!@-1.2em]{
      X_{n+1} \pullbackcorner \ar[r]^-{d_{i+1}} \ar[d]_-{d_0} & X_n \ar[d]^-{d_0} \\
      X_n \ar[r]_-{d_{i}} & X_{n-1}
    } \qquad \text{and} \qquad
    \cd[@!@-1.2em]{
      X_{n+1} \pullbackcorner \ar[r]^-{d_{i}} \ar[d]_-{d_{n+1}} & X_n \ar[d]^-{d_n} \\
      X_n \ar[r]_-{d_{i}} & X_{n-1}\rlap{ .}
    }
  \end{equation*}
  We say that $X$ is \emph{upper $2$-Segal} when only squares as to the
  left are required to be pullbacks, and {\em lower $2$-Segal} when this is
  only required for squares as to the right.
  \footnote{For our purposes,
  splitting into upper $2$-Segal and lower $2$-Segal is just for economy;
  in the theory of higher Segal spaces~\cite{Poguntke:1709.06510}
  ($k$-Segal spaces for $k>2$), the distinction between upper and lower
  becomes essential.}
\end{defi}

\begin{defi}
  A $2$-Segal space $X$ is called \emph{unital} if for all $0\leqslant i \leqslant n$
  the following
  squares are pullbacks:
  \begin{equation*}
    \cd[@!@-1em]{
      X_{n+1} \pullbackcorner \ar[r]^-{s_{i+1}} \ar[d]_-{d_0} & X_{n+2} \ar[d]^-{d_0} \\
      X_{n} \ar[r]_-{s_i} & X_{n+1}
    } \qquad 
	\qquad
    \cd[@!@-1em]{
      X_{n+1} \pullbackcorner \ar[r]^-{s_i} \ar[d]_-{d_{n+1}} & X_{n+2} 
	  \ar[d]^-{d_{n+2}} \\
      X_n \ar[r]_-{s_i} & X_{n+1}\rlap{ .}
    }
  \end{equation*}
  We call an upper $2$-Segal space \emph{upper unital} when only the
  pullbacks on the left are required, and call a lower $2$-Segal space
  \emph{lower unital} when only the pullbacks on the right are required.
\end{defi}

%
%

\begin{thm}
  Every $2$-Segal space is unital.  More precisely, every upper 
  $2$-Segal space is upper unital, and every lower $2$-Segal space is 
  lower unital.
\end{thm}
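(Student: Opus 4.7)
The strategy is pullback pasting: paste each unitality square with auxiliary squares so that the outer composite is a trivial (identity) pullback and every auxiliary piece is a $2$-Segal pullback; the pasting lemma then forces the unitality square to be a pullback. I would argue the upper case; the lower is symmetric.

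For $(n,i)$ with $n\geq 1$, a single extension to the right suffices:
\[ \cd{X_{n+1} \ar[r]^{s_{i+1}} \ar[d]_{d_0} & X_{n+2} \ar[r]^{d_{i+2}} \ar[d]^{d_0} & X_{n+1} \ar[d]^{d_0} \\ X_n \ar[r]_{s_i} & X_{n+1} \ar[r]_{d_{i+1}} & X_n} \]
(or the analogous one with $d_{i+1}, d_i$ in place of $d_{i+2}, d_{i+1}$). The horizontal composites are identities since $d_{i+2}s_{i+1} = d_{i+1}s_i = \operatorname{id}$, and the right square is the upper $2$-Segal square at $X_{n+2}$ of index $i+1$ (resp.~$i$), which is a pullback when $0<i+1<n+1$ (resp.~$0<i<n+1$). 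The two index ranges $\{0,\ldots,n-1\}$ and $\{1,\ldots,n\}$ together cover $\{0,\ldots,n\}$ whenever $n\geq 1$.

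The remaining case $(n,i)=(0,0)$ requires a direct argument, since $X_2$ admits no $2$-Segal squares through which to paste. Let $P := X_0 \times_{X_1} X_2$ be the pullback of $s_0\colon X_0\to X_1$ and $d_0\colon X_2\to X_1$, with projections $p\colon P\to X_0$ and $q\colon P\to X_2$, and canonical map $\phi := (d_0, s_1)\colon X_1\to P$. Define $\psi := d_2\circ q\colon P\to X_1$; then $\psi\phi = \operatorname{id}_{X_1}$ from $d_2 s_1 = \operatorname{id}$. In $\phi\psi$ the first coordinate reduces to $p$ by $d_0 d_2 = d_1 d_0$, the pullback relation $d_0 q \simeq s_0 p$, and $d_1 s_0 = \operatorname{id}$. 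The crucial step is $s_1 d_2 q \simeq q\colon P\to X_2$: using $2$-Segal at $X_3$, the maps $s_1 q, s_2 q\colon P\to X_3$ correspond under $X_3 \simeq X_2 \times_{X_1} X_2$ (via $(d_2,d_0)$) to $(q, s_0 d_0 q)$ and $(q, s_1 d_0 q)$; on $P$ the relation $d_0 q \simeq s_0 p$ makes both second components equal $s_0 s_0 p = s_1 s_0 p$, whence $s_1 q \simeq s_2 q$. Applying $d_3$ with $d_3 s_1 = s_1 d_2$ and $d_3 s_2 = \operatorname{id}$ gives $s_1 d_2 q \simeq q$, so $\phi\psi\simeq\operatorname{id}_P$ and $\phi$ is an equivalence.

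The main obstacle is the boundary case $(0,0)$: since $X_2$ admits no $2$-Segal squares, the pasting strategy cannot propagate $2$-Segal information at $X_3$ down to this unitality square. One must instead compare directly the two ``degenerate liftings'' $s_1, s_2\colon X_2\to X_3$ on $P$, using the $2$-Segal equivalence at $X_3$ to identify them.
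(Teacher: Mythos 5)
For $n \geqslant 1$ your argument is the paper's argument: extend the unitality square on the right by the upper $2$-Segal square whose horizontal edges are the inner face maps retracting $s_{i+1}$ and $s_i$, note that the outer rectangle is the identity pullback, and apply the pasting (prism) lemma; your two index choices cover $0 \leqslant i \leqslant n$ exactly as the paper's choice of $j \in \{i,i+1\}$ with $0 < j \leqslant n$ does. You have also correctly isolated $(n,i)=(0,0)$ as the one case this cannot reach, and here you genuinely diverge. The paper exhibits the $(0,0)$ square as a retract, in the category of commutative squares, of the $(1,1)$ square (section built from degeneracies $s_1,s_1,s_0,s_0$, retraction from $d_1,d_1,d_0,d_0$) and invokes stability of (homotopy) pullbacks under retracts. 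You instead construct an explicit two-sided inverse $\psi = d_2 q$ to the comparison map $\phi = (d_0,s_1)\colon X_1 \to P$, with the key step $s_1 q \simeq s_2 q$ extracted from the upper $2$-Segal equivalence $X_3 \simeq X_2 \times_{X_1} X_2$. The componentwise computations are all correct, and playing $s_1$ against $s_2$ over $X_3$ is a pretty and genuinely different way to feed $2$-Segal information into a square that involves no $2$-Segal data of its own.

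The gap is in the two places where you conclude that maps into a pullback agree because their two components agree: once for $s_1 q \simeq s_2 q$ and once for $\phi\psi \simeq \mathrm{id}_P$. In the settings the theorem is stated for (an $\infty$-category with finite limits, or homotopy pullbacks in a model category), a map into $X_2 \times_{X_1} X_2$ is a pair of maps \emph{together with} a homotopy over $X_1$, and two such maps can have homotopic components without being homotopic; the obstruction lives in loops of the mapping space into $X_1$. So you must additionally check that the homotopies you use --- $s_0(\gamma)$ and $s_1(\gamma)$ for the structural homotopy $\gamma\colon d_0 q \simeq s_0 p$ of $P$, plus simplicial identities --- are compatible with the structural homotopies of $s_1 q$ and $s_2 q$ over $X_1$. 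This is very plausibly true here, since everything is generated by the single homotopy $\gamma$ and strict identities, but as written it is unproved, and it is precisely the coherence bookkeeping the paper's retract argument is designed to avoid: a retract of squares requires no choices of homotopies, and ``pullbacks are stable under retract'' is a clean, model-independent lemma. To repair your version, either carry out the coherence check explicitly (for instance representably, after applying $\mathrm{Map}(T,-)$ and fixing a point-set model of the homotopy pullbacks), or replace the componentwise identifications by an argument made at the level of the pullback itself.
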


\section{The proof}

By symmetry, it is enough to prove:
\begin{propo}\label{upper}
  If $X \colon\simplexcategory\op\to\C$ is upper $2$-Segal,
  then it is also upper unital.
\end{propo}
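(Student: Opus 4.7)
The plan is to reduce each desired unital pullback to an instance of upper $2$-Segal by horizontal pasting. Given the unitality square indexed by $(n,i)$, I paste it on the right with an upper $2$-Segal square indexed by some $j$:
\[
\cd[@!@-1.2em]{
X_{n+1} \ar[r]^-{s_{i+1}} \ar[d]_-{d_0} & X_{n+2} \ar[r]^-{d_{j+1}} \ar[d]_-{d_0} & X_{n+1} \ar[d]^-{d_0} \\
X_n \ar[r]_-{s_i} & X_{n+1} \ar[r]_-{d_j} & X_n\rlap{ .}
}
\]
The right-hand square is an upper $2$-Segal pullback whenever $0 < j < n+1$. Choosing $j = i$ (valid when $i \geq 1$) or $j = i+1$ (valid when $i \leq n-1$), both horizontal composites $d_{j+1} s_{i+1}$ and $d_j s_i$ collapse to identities via the simplicial identities $d_k s_k = d_{k+1} s_k = \mathrm{id}$. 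The outer rectangle is then trivially a pullback, and the pasting lemma yields the left-hand unitality square as a pullback.

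\medskip

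These two choices of $j$ jointly handle every pair $(n,i)$ with $0 \leq i \leq n$, provided $n \geq 1$. The main obstacle is the single remaining base case $(n,i) = (0,0)$: no valid $j$ exists here, because the would-be $2$-Segal square would need top-left corner $X_{n+2} = X_2$, whereas the smallest upper $2$-Segal pullback has $X_3$ in that position. This base case must be handled separately.

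\medskip

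For the base case I will use the next-smallest $2$-Segal pullback $X_3 \simeq X_2 \times_{d_1, X_1, d_0} X_2$ (the $n=2$, $i=1$ instance) as a uniqueness principle. Writing $P := X_0 \times_{s_0, X_1, d_0} X_2$ with projections $p_1, p_2$, the two composites $s_1 p_2,\, s_2 p_2 \colon P \to X_3$ have equal $(d_0, d_2)$-projections: both reduce to $(s_0 s_0 p_1,\, p_2) \in X_2 \times X_2$, using $d_0 s_1 = s_0 d_0$, $d_0 s_2 = s_1 d_0$, $s_1 s_0 = s_0 s_0$, $d_2 s_1 = d_2 s_2 = \mathrm{id}$, and the defining pullback relation $d_0 p_2 = s_0 p_1$ on $P$. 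The $2$-Segal pullback therefore forces $s_1 p_2 \simeq s_2 p_2$; post-composing with $d_3$ and using $d_3 s_1 = s_1 d_2$ and $d_3 s_2 = \mathrm{id}$, I obtain the key identity $s_1 d_2 p_2 \simeq p_2$. This exhibits $(x,\sigma) \mapsto d_2 \sigma$ as a homotopy inverse to the canonical map $X_1 \to P$, $y \mapsto (d_0 y, s_1 y)$, completing the base case.
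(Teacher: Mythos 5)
Your treatment of the cases $n \geqslant 1$ is exactly the paper's argument: choose $j \in \{i,i+1\}$ with $0 < j \leqslant n$, paste the unitality square against the upper $2$-Segal square for the inner face $d_j$, note the outer rectangle is a pullback because its horizontal composites are identities, and apply the prism/pasting lemma. The only divergence is the base case $(n,i)=(0,0)$: the paper exhibits that square as a retract of the $(1,1)$ square and invokes stability of (homotopy) pullbacks under retracts, whereas you construct a homotopy inverse to the comparison map $X_1 \to P = X_0 \times_{X_1} X_2$ by hand.

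Your base-case argument is correct, and quite slick, for $2$-Segal objects of an ordinary category with strict pullbacks (e.g.\ $2$-Segal sets); but in the paper's setting---$\infty$-categories, or model categories where ``pullback'' means homotopy pullback---it has a genuine coherence gap. Twice you conclude that two maps into a pullback agree because their two projections agree; for a homotopy pullback a map in is a pair of maps \emph{together with} a homotopy over the base, and two maps with even strictly equal projections can fail to be equivalent if the witnessing homotopies differ by a nontrivial loop. The first instance ($s_1p_2 \simeq s_2p_2$ via $X_3 \simeq X_2\times_{X_1}X_2$) is salvageable, since in the model-category case both maps hit literally the same point of the homotopy pullback (all the relevant squares commute strictly). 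The second instance is the real problem: to get $cr \simeq \mathrm{id}_P$ from $p_1cr = p_1$ and $p_2cr = s_1d_2p_2 \simeq p_2$, you must also check that $d_0$ applied to your chosen homotopy $s_1d_2p_2 \simeq p_2$ is compatible with the structure homotopy $s_0p_1 \simeq d_0p_2$ of $P$. That homotopy is $d_3K$ for some $K\colon s_1p_2 \simeq s_2p_2$ extracted from the $2$-Segal equivalence, so $d_0d_3K = d_2d_0K$ is an uncontrolled loop at $s_0p_1$, and nothing in your construction forces it to be trivial. (In the $\infty$-categorical case there is the further issue that the simplicial identities you compute with only hold up to specified coherent homotopies.) This bookkeeping is exactly what the paper's retract argument is designed to avoid: retract-stability of pullbacks packages all the coherence into one robust, homotopy-invariant statement. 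Either adopt the retract of the $(1,1)$ square for the base case, or supply the missing compatibility of homotopies.
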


We do so using two lemmas, which are standard both in
$\infty$-category theory and model category theory. 

\begin{lem}[Prism Lemma]\label{prismlem}
  Given a commuting diagram
  \begin{equation*}
	\cd[@!]{
	  {A} \ar[r]^-{} \ar[d]_{} &
	  {B} \ar[r]^-{} \ar[d]_{} &
	  {C} \ar[d]^{} \\
	  {D} \ar[r]_-{} &
	  {E} \ar[r]_-{} &
	  {F}
	}
  \end{equation*}
  (formally a $\Delta^1 \times \Delta^2$-diagram in the
  $\infty$-category case), suppose the right-hand square is a 
  pullback. Then
  the outer rectangle is a pullback if and only if the left-hand square
  is a pullback.
\end{lem}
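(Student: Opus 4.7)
The plan is to derive the Prism Lemma from the standard pasting law for iterated (homotopy) pullbacks, treating the $\infty$-categorical and model-categorical cases uniformly in terms of universal properties.

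The core argument runs as follows. Suppose the right-hand square is a pullback, so that the canonical comparison map $B \to C \times_F E$ is an equivalence, identifying the map $B \to E$ with the projection $C \times_F E \to E$. Under this identification, the pullback of the left-hand cospan $D \to E \leftarrow B$ can be rewritten using the ``Fubini'' identity for iterated pullbacks over a common base,
\[
B \times_E D \;\simeq\; (C \times_F E) \times_E D \;\simeq\; C \times_F D.
\]
This chain of equivalences identifies the canonical comparison map $A \to B \times_E D$ (whose being an equivalence witnesses that the left-hand square is a pullback) with the canonical comparison map $A \to C \times_F D$ (whose being an equivalence witnesses that the outer rectangle is a pullback). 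Hence one is an equivalence if and only if the other is.

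In the $\infty$-categorical setting, the Fubini identity $(C \times_F E) \times_E D \simeq C \times_F D$ is an instance of the iterative computation of limits, a formal consequence of the universal property in any $\infty$-category with finite limits; indeed, the total $\Delta^1 \times \Delta^2$-diagram can be read as an iterated cone whose limit agrees with the limit of either of its subcones once the complementary square is a limit. In the model-categorical setting, the analogous identity for homotopy pullbacks is the classical pasting lemma for homotopy pullbacks, typically proved by first functorially replacing the maps into $F$ (and, if needed, into $E$) by fibrations so that strict pullbacks compute homotopy pullbacks, and then invoking the $1$-categorical pasting law.

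The main thing to be careful about is keeping the two settings in lockstep; the cleanest presentation is to phrase the argument abstractly in terms of comparison maps to pullbacks, and to invoke the appropriate standard pasting result in each context. No real obstacle arises beyond this bookkeeping, and the proof is therefore expected to be very short.
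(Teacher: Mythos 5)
Your proposal follows essentially the same route as the paper: in the $\infty$-categorical case the ``iterated cone'' reading of the $\Delta^1\times\Delta^2$-diagram is exactly the content of the result the paper cites from Lurie, and in the model-categorical case the strategy of converting to fibrations and invoking the strict pasting law is what the paper carries out in its appendix. The identification of the two comparison maps via $B \times_E D \simeq (C\times_F E)\times_E D \simeq C \times_F D$ is a correct way to organise the argument.

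There is, however, one concrete gap in the model-categorical half as you have sketched it. You propose to ``functorially replace the maps into $F$ (and, if needed, into $E$) by fibrations so that strict pullbacks compute homotopy pullbacks.'' In a general model category this is not enough: a strict pullback along a fibration computes the homotopy pullback only when the model category is right proper, or else when the objects of the cospan are themselves fibrant. This is precisely why the paper does not simply cite Hirschhorn (who treats the right-proper case) but instead gives an appendix proof in which the cospans are fibrantly replaced in the injective model structure on $\C^{\III}$, producing cospans of fibrations \emph{between fibrant objects} before any strict pullbacks are taken. Relatedly, your step ``identifying $B\to E$ with the projection $C\times_F E\to E$'' needs the fact that homotopy pullbacks are invariant under replacing one leg of the cospan by a weakly equivalent one; this is true, but it is exactly the point the paper's proof establishes carefully by checking that the strict pullback $B'$ together with $D'\twoheadrightarrow E'$ forms a fibrant replacement of the cospan $D\to E\leftarrow B$. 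Your outline is repairable along these lines, but as written it only covers the right-proper case, whereas the lemma is needed (and stated) for an arbitrary model category.
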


\begin{proof}
  For the $\infty$-category version, see (the dual
  of)~\cite[Lemma~4.4.2.1]{Lurie:HTT}. The model category version is
  proven in the right-proper case
  in~\cite[Proposition~13.3.9]{Hirschhorn}; we give the
  general case in the appendix.
\end{proof}

\begin{lem}\label{retractlem}
  Pullback squares are stable under retract.
\end{lem}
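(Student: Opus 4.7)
I would reduce the statement in both settings to the closure of equivalences under retract, by characterizing the pullback property through a single comparison morphism that depends functorially on the square. This is the standard strategy and makes the two cases almost uniform.

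Concretely, for a commuting square $S$ in $\C$, one builds—functorially in $S$—a comparison morphism $\alpha_S \colon S_{00} \to P_S$, where $P_S$ is the (homotopy) limit of the cospan obtained by discarding the top-left corner of $S$. In the $\infty$-categorical case, $P_S$ is just the limit of the restricted $\Delta^1\times\Delta^1$-diagram and $\alpha_S$ is the canonical comparison map; functoriality in $S \in \Fun(\Delta^1\times\Delta^1,\C)$ is clear. In the model-categorical case, one uses a functorial fibrant replacement of the cospan together with a functorial factorization to realize $P_S$ as a strict pullback of a fibration between fibrant objects, producing $\alpha_S$ functorially. By construction, $S$ is a (homotopy) pullback if and only if $\alpha_S$ is an equivalence (respectively, a weak equivalence).

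Now assume $S$ is a retract of a pullback square $S'$, i.e., there are morphisms of squares $S \to S' \to S$ composing to the identity. Applying the functorial assignment $S \mapsto \alpha_S$ exhibits $\alpha_S$ as a retract of $\alpha_{S'}$, and $\alpha_{S'}$ is an equivalence (resp. weak equivalence) by hypothesis. Equivalences in an $\infty$-category are closed under retract, and the same for weak equivalences in a model category (one of the axioms). Therefore $\alpha_S$ is an equivalence (resp. weak equivalence), and $S$ is a pullback.

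The only place that requires care is the functoriality of the comparison map in the model-categorical setting: one must arrange that the fibrant replacement and factorization used to construct $P_S$ are functorial in $S$ and in particular compatible with the retract maps. Functorial factorizations exist in any model category, so this is routine rather than an actual obstacle; still, it is the one place where the proof is less tautological than in the $\infty$-categorical case.
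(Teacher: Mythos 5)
Your overall strategy---characterize ``being a (homotopy) pullback'' by a comparison map into the (strict) pullback of a fibrant replacement of the cospan, arrange for that comparison map to be a retract of the corresponding map for the ambient square, and invoke closure of (weak) equivalences under retracts---is exactly the strategy of the paper's proof, and the $\infty$-categorical half is unproblematic (the paper simply cites the dual of \cite[Lemma~5.1.6.3]{Lurie:HTT}). The issue is in the model-categorical half, at precisely the step you identify and then wave away. Your justification is the claim that ``functorial factorizations exist in any model category''; this is not true in the generality the paper works in. The paper explicitly allows an arbitrary \emph{Quillen} model category, and Quillen's axioms only assert the existence of factorizations, not their functoriality (functoriality is an added axiom in treatments such as Hovey's, and is automatic for cofibrantly generated model categories via the small object argument, but not in general). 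So as written, your reduction of $\alpha_S$ to a retract of $\alpha_{S'}$ has a genuine gap: you have no functorial fibrant replacement of cospans to apply to the retract diagram.

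The paper's proof shows how to close this gap without any functoriality hypothesis: rather than replacing each cospan independently by a functorial recipe, it builds a fibrant replacement \emph{of the retract diagram itself}. Concretely, writing the retract of squares as arrows $\Delta Q \to B \;\to\; \Delta P \to A \;\to\; \Delta Q \to B$ in $\C^{\III}$, one first fibrantly replaces $B$ by a trivial cofibration $B \rightarrowtail B'$, then factors the composite $A \to B \to B'$ as a trivial cofibration $A \rightarrowtail A'$ followed by a fibration $A' \twoheadrightarrow B'$, and finally uses the lifting axiom (trivial cofibration $B \rightarrowtail B'$ against the fibration $A' \twoheadrightarrow B'$) to produce a map $B' \to A'$ splitting $A' \twoheadrightarrow B'$ over the original retraction. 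This hands you a retract diagram $B' \to A' \to B'$ of fibrant cospans compatible with the original one, after which your argument (strict pullbacks, comparison maps, retract axiom for weak equivalences) goes through verbatim. So the idea is right, but the ``routine'' step is the actual mathematical content of the lemma in the model-categorical setting, and it needs the lifting-based construction rather than an appeal to functorial factorizations.
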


\begin{proof}
  The $\infty$-category version follows from (the dual of)
  \cite[Lemma~5.1.6.3]{Lurie:HTT}. The model category version is known 
  to experts, but since we do not know of any reference, we give a proof in 
  the appendix.
\end{proof}

\begin{proof}[Proof of Proposition~\ref{upper}]
  We first establish the pullback condition for $n \geqslant
   1$ and $0 \leqslant i \leqslant n$ by following the argument
   of~\cite[Proposition~3.5]{Galvez-Kock-Tonks:1512.07573},
   exploiting that every degeneracy map except $s_0\colon
   X_0\rightarrow X_1$ is a section of an inner face map. Explicitly, if we choose $j\in\{i,i+1\}$ with $0 < j \leqslant
   n$, then $s_{i} \colon X_{n} \rightarrow X_{n+1}$ is a
   section of the inner face map $d_{j} \colon X_{n+1}
   \rightarrow X_{n}$ and $s_{i+1} \colon X_{n+1}
   \rightarrow X_{n+2}$ is a section of $d_{j+1}$, forming
   the prism diagram to the left below. Here the outer
  square is a pullback since its top and bottom edges are the images
  of identity maps in $\simplexcategory$, while the right-hand square
  is a pullback since $X$ is upper $2$-Segal and $d_j$ is an 
  inner face map. So by Lemma~\ref{prismlem}, the left-hand square is a pullback as required.
  \begin{equation*}
    \cd[@!@-0.8em]{
      {X_{n+1}} \ar[r]^-{s_{i+1}} \ar[d]_{d_0} &
      {X_{n+2}} \ar[d]^{d_0} \ar[r]^-{d_{j+1}} &
      X_{n+1} \ar[d]^-{d_0} \\
      {X_n} \ar[r]_-{s_i} &
      X_{n+1} \ar[r]_-{d_{j}} &
      {X_n}
    } \qquad
    \cd[@!@-1.7em]{
      {X_1} \ar@{->}[rrrr]^-{s_1} \ar[dd]_{d_0} \ar[dr]^-{s_1} & & & & 
      {X_2} \ar@{->}[rrrr]^-{d_1} \ar[dd]_(0.3){d_0}|-\hole
      \ar[dr]^-{s_2} & & &
      & 
      {X_1} \ar[dd]_(0.3){d_0}|-\hole \ar[dr]^-{s_1} \\ &
      {X_2} \ar@{->}[rrrr]^-{s_1} \ar[dd]^(0.3){d_0}  & & & & 
      {X_3} \ar@{->}[rrrr]^-{d_1} \ar[dd]^(0.3){d_0}  & & & & 
      {X_2} \ar@{->}[dd]^{d_0} \\ 
      {X_0} \ar@{->}[rrrr]_-{s_0}|(0.25)\hole \ar[dr]_-{s_0} & & & & 
      {X_1} \ar@{->}[rrrr]_-{d_0}|(0.25)\hole \ar[dr]_-{s_1}& & & & 
      {X_0} \ar[dr]_-{s_0}\\ &
      {X_1} \ar@{->}[rrrr]_-{s_0} & & & & 
      {X_2} \ar@{->}[rrrr]_-{d_0} & & & & 
      {X_1}
    }
  \end{equation*}
  The remaining case, which is not covered
  by~\cite[Proposition~3.5]{Galvez-Kock-Tonks:1512.07573}, is the
  square with $n=i=0$. To see that this is a pullback, we
  exhibit it as a retract of the square for $n=i=1$, as displayed above
  right. Since we already know the $n=i=1$ square is a pullback, so is
  the $n=i=0$ square by Lemma~\ref{retractlem}.
\end{proof}

\section*{Appendix}

\looseness=-1 We provide proofs of the two lemmas in the context of a model category
$\C$. First we recall the notion of (strictly commuting) homotopy
pullback. Writing $\III$ for the cospan category
$0 \rightarrow 2 \leftarrow 1$, we endow $\C^{\III}$ with the
\emph{injective} model structure, whose weak equivalences and
cofibrations are pointwise, and whose fibrant objects are cospans of
fibrations between fibrant objects in $\C$. A commuting square in
$\C$, as to the left in
\begin{equation*}
  \cd[@-0.2em]{
    {P} \ar[r]^-{} \ar[d]_{} &
    {A_1} \ar[d]^{} & &
    A_0 \ar[d]^*-<0.7em>[@!-90]{\scriptstyle\sim} \ar[r]^-{} & A_2 \ar[d]^*-<0.7em>[@!-90]{\scriptstyle\sim} & A_1
    \ar[l]_-{} \ar[d]^*-<0.7em>[@!-90]{\scriptstyle\sim}\\
    {A_0} \ar[r]_-{} &
    {A_2} & &
    A'_0 \ar@{->>}[r] & A'_2 & A'_1 \ar@{->>}[l]
  }
\end{equation*}
is a \emph{homotopy pullback} if for some (equivalently, any)
fibrant replacement in
$\smash{\C^{\III}}$ for its
underlying cospan, as displayed to the right above, the induced map $\smash{P \rightarrow A_0'
\times_{A'_2} A'_1}$ into the strict pullback is a weak equivalence.

\begin{proof}[Proof of Lemma~\ref{prismlem} in the model category case.]
  We first replace $D \rightarrow E \rightarrow F \leftarrow C$ by a
  diagram of fibrations between fibrant objects, as to the left in:
  \begin{equation*}
    \cd[@!@-1.8em]{
      {A} \ar[rr]^-{} \ar[dd]_{}  & &
      {B} \ar[rr]^-{} \ar[dd]_{} & &
      {C} \ar[dd]^{} \ar[dr]^(.4)*-<0.7em>[@!-45]{\scriptstyle\sim} \\ & & & & &
      {C'} \ar@{->>}[dd]^{} \\ 
      {D} \ar[rr]_-{} \ar[dr]^(.4)*-<0.7em>[@!-45]{\scriptstyle\sim}& &
      {E} \ar[rr]_-{} \ar[dr]^(.4)*-<0.7em>[@!-45]{\scriptstyle\sim}& &
      {F} \ar[dr]^(.4)*-<0.7em>[@!-45]{\scriptstyle\sim}\\ &
      {D'} \ar@{->>}[rr]_-{} & &
      {E'} \ar@{->>}[rr]_-{} & &
      {F'}
    } \qquad \qquad 
        \cd[@!@-1.8em]{
      {A} \ar[rr]^-{} \ar[dd]_{} \ar[dr]^-{} & &
      {B} \ar[rr]^-{} \ar[dd]_{}|-\hole 
	  \ar[dr] & &
      {C} \ar[dd]^{}|-\hole \ar[dr]^(.4)*-<0.7em>[@!-45]{\scriptstyle\sim} \\ &
      {A'} \ar[rr]^-{} \ar@{->>}[dd]_{} \pullbackcorner & &
      {B'} \ar[rr]^-{} \ar@{->>}[dd]_{} \pullbackcorner & &
      {C'} \ar@{->>}[dd]^{} \\ 
      {D} \ar[rr]_-{}|-\hole \ar[dr]^(.4)*-<0.7em>[@!-45]{\scriptstyle\sim}& &
      {E} \ar[rr]_-{}|-\hole \ar[dr]^(.4)*-<0.7em>[@!-45]{\scriptstyle\sim}& &
      {F} \ar[dr]^(.4)*-<0.7em>[@!-45]{\scriptstyle\sim}\\ &
      {D'} \ar@{->>}[rr]_-{} & &
      {E'} \ar@{->>}[rr]_-{} & &
      {F'}\rlap{ .}
    }
  \end{equation*}
  By taking strict pullbacks we complete this to the diagram as to the
  right. Since the right-hand back face is assumed to be a homotopy
  pullback, $B \rightarrow B'$ is a weak equivalence, and so
  $D' \twoheadrightarrow E' \twoheadleftarrow B'$ is a fibrant
  replacement for $D \rightarrow E \leftarrow B$ in $\C^{\III}$. Thus
  $A \rightarrow A'$ is a weak equivalence exactly when the
  left-hand back face is a homotopy pullback. Since
  $D' \twoheadrightarrow F' \twoheadleftarrow C'$ is a fibrant
  replacement for $D \rightarrow F \leftarrow C$, we also have that
  $A \rightarrow A'$ is a weak equivalence exactly when the back
  rectangle is a homotopy pullback, as desired.
\end{proof}

\begin{proof}[Proof of Lemma~\ref{retractlem} in the model category case.]
  Suppose given a homotopy pullback square in $\C$, together with a retract
  of it in the category of commutative squares in $\C$, as to the left in:
  \begin{equation*}
    \cd[@!@-1.9em]{
      {Q} \ar@{->}[rr]^-{} \ar[dd]_{} \ar[dr]^-{} & &
      {P} \ar@{->}[rr]^-{} \ar[dd]_{}|-\hole \ar[dr]^-{} & &
      {Q} \ar[dd]^{}|-\hole \ar[dr]^-{} \\ &
      {B_1} \ar@{->}[rr]^-{} \ar[dd]_{}  & &
      {A_1} \ar@{->}[rr]^-{} \ar[dd]_{}  & &
      {B_1} \ar@{->}[dd]^{} \\ 
      {B_0} \ar@{->}[rr]_-{}|-\hole \ar[dr]^-{}& &
      {A_0} \ar@{->}[rr]_-{}|-\hole \ar[dr]^-{}& &
      {B_0} \ar[dr]^-{}\\ &
      {B_2} \ar@{->}[rr]_-{} & &
      {A_2} \ar@{->}[rr]_-{} & &
      {B_2}
    } \qquad \qquad 
    \cd[@!]{
      {\Delta Q} \ar[r]^-{} \ar[d]_{} &
      {\Delta P} \ar[r]^-{} \ar[d]_{} &
      {\Delta Q} \ar[d]^{} \\
      {B} \ar[r]_-{} &
      {A} \ar[r]_-{} & B\rlap{ .}
    }
  \end{equation*}
  We must show that the left 
  (equally, the right) 
  face of this diagram
  is also a homotopy pullback. Regarding
  $B_0 \rightarrow B_2 \leftarrow B_1$ and
  $A_0 \rightarrow A_2 \leftarrow A_1$ as objects of $\C^{\III}$, and
  regarding
  $Q$ and $P$ as constant objects $\Delta Q$ and $\Delta P$, we obtain
  a retract diagram of arrows in $\C^{\III}$ as displayed to the right
  above. We will now fibrantly replace $A$ and $B$ in $\C^{\III}$ in
  such a way as to obtain a new retract diagram $B' \to A' \to B'$. To
  this end, we first fibrantly replace $B$ via a trivial cofibration
  $B
  \mathrel{\stackrel{\scriptscriptstyle\sim}{\vphantom{a}\smash{\rightarrowtail}}}
  B'$. Now we factor the composite $A \rightarrow B \rightarrow B'$ as
  a trivial cofibration
  $A
  \mathrel{\smash{\stackrel{\scriptscriptstyle\sim}{\vphantom{a}\smash{\rightarrowtail}}}}
  A'$ followed by a fibration $A' \twoheadrightarrow B'$. Finally, we
  take a lifting in the square
  \begin{equation*}
    \cd{
      {B} \ar[r]^-{} \ar@{ >->}[d]^*-<0.7em>[@!-90]{\scriptstyle\sim}& A \ar[r]^-{} & 
      {A'} \ar@{->>}[d]^{} \\
      {B'} \ar@{=}[rr] \ar@{-->}[urr]_-{}& &
      {B'}\rlap{ .}
    }
  \end{equation*}
  Altogether, we now have a retract diagram in the category of
  composable pairs in $\C^{\III}$, as to the left in:
\begin{equation*}
    \cd[@!@-2.3em]{
      {\Delta Q} \ar[rr]^-{} \ar[dd]_{}  & &
      {\Delta P} \ar[rr]^-{} \ar[dd]_{} & &
      {\Delta Q} \ar[dd]^{}  \\ \\
      {B} \ar[rr]_-{} \ar[dr]^(.4)*-<0.7em>[@!-45]{\scriptstyle\sim}& &
      {A} \ar[rr]_-{} \ar[dr]^(.4)*-<0.7em>[@!-45]{\scriptstyle\sim}& &
      {B} \ar[dr]^(.4)*-<0.7em>[@!-45]{\scriptstyle\sim}\\ &
      {B'} \ar[rr]_-{} & &
      {A'} \ar[rr]_-{} & &
      {B'}
    } \qquad \qquad 
        \cd[@!@-2.6em]{
      {\Delta Q} \ar[rr]^-{} \ar[dd]_{} \ar[dr]^-{} & &
      {\Delta P} \ar[rr]^-{} \ar[dd]_{}|-\hole \ar[dr]^-{} & &
      {\Delta Q} \ar[dd]^{}|-\hole \ar[dr]^-{} \\ &
      {\Delta Q'} \ar[rr]^-{} \ar[dd]_{} & &
      {\Delta P'} \ar[rr]^-{} \ar[dd]_{} & &
      {\Delta Q'} \ar[dd]^{} \\ 
      {B} \ar[rr]_-{}|-\hole \ar[dr]^-{}& &
      {A} \ar[rr]_-{}|-\hole \ar[dr]^-{}& &
      {B} \ar[dr]^-{}\\ &
      {B'} \ar[rr]_-{} & &
      {A'} \ar[rr]_-{} & &
      {B'}\rlap{ .}
    }
  \end{equation*}
  By forming the strict pullbacks $Q'$ and $P'$ of the cospans $B'$ and
  $A'$ we may complete this to the retract diagram as to the right above; note
  in particular that the map $Q \rightarrow Q'$ in $\C$ is a retract of the
  map $P \rightarrow P'$. Since $\Delta P \rightarrow A$ describes
  a homotopy pullback, $P \rightarrow P'$ is a weak equivalence; so
  its retract $Q \rightarrow Q'$ is also a weak equivalence, which is to 
  say that $\Delta Q \rightarrow B $ also describes a homotopy pullback.
\end{proof}

\medskip

  \footnotesize
  
  \noindent {\bf Acknowledgments.}
  R.G. was supported by Australian Research Council grants DP160101519 and 
  FT160100393. J.K. was supported by grants
  MTM2016-80439-P (AEI/FEDER, UE) of Spain and 2017-SGR-1725 of
  Catalonia. M.W. was supported by Czech Science Foundation grant GA CR P201/12/G028.

 \normalsize

\medskip

\noindent E-mail addresses:\\
Matthew Feller \url{<feller@virginia.edu>}\\
Richard Garner \url{<richard.garner@mq.edu.au>}\\
Joachim Kock \url{<kock@mat.uab.cat>}\\
May U. Proulx \url{<meup1@leicester.ac.uk>}\\
Mark Weber \url{<mark.weber.math@googlemail.com>}
\end{document}